\newtheorem{thm}{Theorem}
\numberwithin{thm}{section}
\newtheorem{cor}[thm]{Corollary}
\newtheorem{prop}[thm]{Proposition}
\newtheorem{lem}[thm]{Lemma}
\theoremstyle{definition}
\theoremstyle{remark}
\newtheorem{rmk}[thm]{Remark}
\newcommand{\co}{\colon\thinspace}
\newcommand{\mb}[1]{\mathbb{#1}}
\newcommand{\op}{{op}}
\DeclareMathOperator{\map}{map}
\DeclareMathOperator{\Tor}{Tor}
\DeclareMathOperator{\Fun}{Fun}
\DeclareMathOperator{\Sp}{Sp}
\DeclareMathOperator{\Syn}{Syn}
\title{Synthetic spectra are (usually) cellular}
\author{Tyler Lawson}
\begin{document}
\maketitle

\begin{abstract}
  If $E$ is a connective ring spectrum, then Pstr{\k a}gowski's category $\Syn_E$ of $E$-synthetic spectra is generated by the bigraded spheres $S^{i,j}$. In particular, it is equivalent to the category of modules over a filtered ring spectrum.
\end{abstract}

\section{Introduction}

Our goal in this short note is to prove the following result.
\begin{thm}
  \label{thm:syntheticcellular}
  If $E$ is connective then the category $\Syn_E$ of $E$-synthetic spectra from \cite{pstragowski-synthetic} is \emph{cellular}: it is generated under homotopy colimits by the bigraded spheres $S^{i,j}$.
\end{thm}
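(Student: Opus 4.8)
The plan is to reduce the statement to the synthetic analogues of compact spectra and then to bootstrap it from an algebraic ``cellularity modulo $\tau$'' which is where connectivity of $E$ does its work. First I would set up the reductions. By construction $\Syn_E$ is a Bousfield localization of spectral presheaves on $\Sp^{fp}_E$, so it is generated under colimits by the (stabilized, sheafified) representables, namely the objects $\Sigma^{i,0}\nu(P)$ for $P\in\Sp^{fp}_E$ and $i\in\mathbb Z$, where $\nu$ is Pstr{\k a}gowski's synthetic-analogue functor. Write $\mathcal C\subseteq\Syn_E$ for the localizing subcategory generated by the $S^{i,j}$. Since $-\otimes S^{a,b}$ is an equivalence that merely permutes the bigraded spheres, $\mathcal C$ is stable under every $\Sigma^{a,b}$; hence it suffices to show $\nu(P)\in\mathcal C$ for all $P$, and since each $\nu(P)$ is compact this is equivalent to $\nu(P)$ lying in the thick subcategory $\mathrm{Thick}(S^{i,j})$. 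Replacing $P$ by a suspension, we may assume $P$ is connective.

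Next I would record the two structural inputs about $\nu$ and the canonical map $\tau$ (with cofiber $C\tau$). (i) If $A\to B\to C$ is a cofiber sequence in $\Sp^{fp}_E$ with $E_*A\to E_*B$ injective, then $\nu A\to\nu B\to\nu C$ is again a cofiber sequence; dually, if $E_*A\to E_*B$ vanishes, then $\nu B\to\nu C\to\Sigma^{1,1}\nu A$ is a cofiber sequence. In particular, if $\nu A$ and $\nu B$ lie in $\mathrm{Thick}(S^{i,j})$ and $A\to B\to C$ is of either type, then so does $\nu C$; and $\mathrm{Thick}(S^{i,j})$ is closed under $-\otimes C\tau$ and its iterates. (ii) $\nu(P)$ is $\tau$-complete. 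Fact (ii) is exactly where connectivity of $E$ is used: for connective $E$ the bigraded spheres $S^{0,-n}$ become increasingly highly connected, so for connective $X$ the tower $\{\Sigma^{0,-n}X\}_n$ has limit $0$ degreewise, i.e.\ $X$ is $\tau$-complete; in particular $\nu(P)$ is. (This is also what fails for non-connective $E$, explaining the word ``usually''.)

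Third, I would prove the cellularity statement modulo $\tau$, which is purely algebraic. The category $\Mod_{C\tau}(\Syn_E)$ is, by the standard identification, a stable category of $E_*E$-comodules, under which $\nu(P)\otimes C\tau$ corresponds to the homology comodule $E_*P$ and the $S^{i,j}\otimes C\tau$ correspond to the internal shifts of the unit comodule $E_*$. Because $E$ is connective the Hopf algebroid $(E_*,E_*E)$ is connective, so every comodule is the filtered colimit of its finitely generated subcomodules, and a finitely generated comodule admits a finite filtration whose subquotients are concentrated in a single internal degree (inductively take the lowest nonzero internal degree, which is automatically a subcomodule). Those subquotients are shifts of co-trivial comodules, hence built from shifts of $E_*$; so the comodule category is generated by shifts of the unit, and the compact object $\nu(P)\otimes C\tau$ lies in $\mathrm{Thick}(S^{i,j}\otimes C\tau)\subseteq\mathrm{Thick}(S^{i,j})$.

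Finally I would integrate up the $\tau$-adic tower. The octahedral axiom gives cofiber sequences $\Sigma^{0,-1}(\nu(P)\otimes C\tau^n)\to\nu(P)\otimes C\tau^{n+1}\to\nu(P)\otimes C\tau$, so by the third step each $\nu(P)\otimes C\tau^n$ lies in $\mathrm{Thick}(S^{i,j})$; and by the second step $\nu(P)=\lim_n\nu(P)\otimes C\tau^n$. Since the layers $\Sigma^{0,-n}(\nu(P)\otimes C\tau)$ of the tower become increasingly highly connected, the tower converges inside the cellular subcategory, so $\nu(P)\in\mathcal C$; being compact, $\nu(P)\in\mathrm{Thick}(S^{i,j})$. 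Undoing the reductions yields the theorem. The two points I expect to be the real obstacles are: (a) making the last convergence argument rigorous — that $\mathcal C$ is closed under these highly-connected inverse limits, which is the technical heart and exactly what the connectivity hypothesis is for; and (b) making the identification of $\Mod_{C\tau}(\Syn_E)$ with a comodule category precise enough to run the internal-degree induction of step three.
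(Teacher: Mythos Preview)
Your strategy---reduce modulo $\tau$ to a statement about comodules, then integrate up the $\tau$-adic tower---is quite different from the paper's. The paper never passes to $\Mod_{C\tau}$ or invokes $\tau$-completeness. Instead it filters each finite $P$ with $E_*P$ projective by its Moore tower, so that the layers are suspended Moore spectra $\Sigma^k M(H_k P)$, checks (via an Atiyah--Hirzebruch argument using connectivity of $E$) that these cofiber sequences survive $\nu$, and then proves directly that $\nu(MA)$ is cellular for each finitely generated abelian group $A$ with $E_0\otimes A$ projective. That last step uses the structure theorem for finitely generated abelian groups and an explicit construction of $\nu(\mathbb{S}/m)$ from $\nu(\mathbb{S}[1/m])$ and $\nu(\mathbb{S}/m^\infty)$; this is exactly where the PID hypothesis on $\pi_0$ enters, as the paper remarks.

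Your step~3 has a genuine gap beyond the two you flag. First, the ``lowest nonzero internal degree'' $M_k$ of a comodule $M$ is not a sub-$E_*$-module (for $j>0$ one has $E_j\cdot M_k\subseteq M_{k+j}$, not $M_k$), so it is certainly not a subcomodule; at best $M_k$ is a \emph{quotient} comodule via $M\to M/M_{>k}$. More seriously, even after reversing the direction, the resulting filtration is not finite: $E_*P$ is a finitely generated projective $E_*$-module, hence bounded below but typically unbounded above (already $E_*\mathbb{S}=E_*$ is unbounded for $E=MU$, $ku$, \dots), so peeling off one internal degree at a time never terminates. Finally, the subquotients $M_k$ are $E_0$-modules regarded as $E_*$-modules via the augmentation, and showing that the comodule $E_0$ lies in the localizing subcategory of stable comodules generated by shifts of $E_*$ is not visibly easier than the original problem---for instance, writing $E_0$ as an iterated cofiber $E_*/(x_1,x_2,\dots)$ fails because multiplication by a polynomial generator is not a comodule map. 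So step~3 does not actually reduce the question, and the paper's Moore-spectrum argument is doing real work that your outline does not replace.
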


The bigraded spheres then serve as a set of compact generators for $\Syn_E$, and thus the Schwede--Shipley theorem applies; this allows us to exhibit the synthetic category as a category of left modules over a $\mb Z$-graded ring spectrum $R_\bullet$ whose bigraded coefficient groups are $\pi_{\ast,\ast} (S^{0,0})$.

Our argument roughly parallels that of \cite[6.2]{pstragowski-synthetic} for the case of $MU$-modules. It relies relatively heavily on the classification of finitely generated abelian groups; more specifically, it would apply to a category of ``$E$-synthetic $R$-modules'' for any connective commutative ring spectrum $R$ such that $\pi_0 R$ is a principal ideal domain. (It specifically does \emph{not} apply to the spectra of Patchkoria--Pstr{\k a}gowski \cite{patchkoria-pstragowski-adams}; those are no longer generated by finite $X$ with $E_* X$ projective.) We are grateful to Pstr{\k a}gowski for their comments.

Throughout this paper we assume that $E$ is an associative ring spectrum and that $E_*$ is connective. Associated to this, there is a natural retraction of graded rings $E_0 \to E_* \to E_0$, the latter map realized by a map $E \to HE_0$ of ring spectra. Finally, when we refer to a category of modules over a ring, graded ring, or ring spectrum, we are referring to \emph{left} modules.

\section{Graded projective modules}

The coefficient ring $E_*$ is a connective graded ring, and so the following is a standard result.
\begin{prop}
  \label{prop:detectprojectivity}
The following are equivalent for a bounded-below graded $E_*$-module $M$:
  \begin{itemize}
  \item $M$ is projective as an $E_*$-module.
  \item $M$ is of the form $E_* \otimes_{E_0} P$ for a projective graded $E_0$-module $P$.
  \item The graded $E_0$-module $\overline M = E_0 \otimes_{E_*} M$ is projective, and $M$ is isomorphic to $E_* \otimes_{E_0} \overline M$.
  \item The graded $E_0$-module $\overline M = E_0 \otimes_{E_*} M$ is projective, and the bigraded Tor-groups $\Tor^{E_*}_{p,q}(M; E_0)$ are zero if the homological degree $p$ is positive.
  \end{itemize}
\end{prop}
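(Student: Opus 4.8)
The plan is to run the cycle of implications $(1)\Rightarrow(4)\Rightarrow(3)\Rightarrow(2)\Rightarrow(1)$, of which only $(4)\Rightarrow(3)$ carries real content; the workhorse throughout is the graded Nakayama lemma. Recall its form in this setting: since $E_*$ is connective, the augmentation ideal $\mf m = E_{>0}$ satisfies $E_*/\mf m = E_0$, and any bounded-below graded $E_*$-module $N$ with $\mf m N = N$ vanishes --- inspect the lowest nonzero degree, which cannot lie in $\mf m N$. I will also use two base-change facts: the identity $E_0\otimes_{E_*}(E_*\otimes_{E_0}P)\cong P$ (via the retraction $E_*\to E_0$), and the observation that $E_*\otimes_{E_0}(-)$ carries projective graded $E_0$-modules to projective graded $E_*$-modules while $E_0\otimes_{E_*}(-)$ carries projective graded $E_*$-modules to projective graded $E_0$-modules --- both because a projective is a retract of a free module and free modules base-change to free modules.

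Granting these, the three routine implications are immediate. For $(2)\Rightarrow(1)$: $E_*\otimes_{E_0}P$ is a retract of a graded free $E_*$-module, hence projective. For $(3)\Rightarrow(2)$: take $P=\overline M$. For $(1)\Rightarrow(4)$: if $M$ is projective over $E_*$ then $\Tor^{E_*}_{p,q}(M,E_0)=0$ for all $p>0$, and $\overline M=E_0\otimes_{E_*}M$ is a retract of a graded free $E_0$-module, hence projective.

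The heart is $(4)\Rightarrow(3)$. Assume $\overline M$ is projective over $E_0$ and $\Tor^{E_*}_{p,q}(M,E_0)=0$ for $p>0$. The reduction $M\twoheadrightarrow\overline M$ is $E_0$-linear, so projectivity of $\overline M$ over $E_0$ yields an $E_0$-linear section $s\co\overline M\to M$, which extends uniquely to an $E_*$-linear map $\phi\co E_*\otimes_{E_0}\overline M\to M$ satisfying $\phi\otimes_{E_*}E_0=\mathrm{id}_{\overline M}$. Right-exactness of $-\otimes_{E_*}E_0$ then forces $\mathrm{coker}(\phi)\otimes_{E_*}E_0=0$; as source and target of $\phi$ are bounded below, so is $\mathrm{coker}(\phi)$, and graded Nakayama gives $\mathrm{coker}(\phi)=0$, so $\phi$ is surjective. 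Let $K=\ker\phi$, giving a short exact sequence $0\to K\to E_*\otimes_{E_0}\overline M\xrightarrow{\phi}M\to 0$ whose middle term is projective over $E_*$ by the base-change fact above. The long exact $\Tor$-sequence for $-\otimes_{E_*}E_0$ then contains the segment
\[
\Tor^{E_*}_{1}(M,E_0)\;\to\;K\otimes_{E_*}E_0\;\to\;(E_*\otimes_{E_0}\overline M)\otimes_{E_*}E_0\;\xrightarrow{\ \cong\ }\;M\otimes_{E_*}E_0\;\to\;0,
\]
with the first term zero by hypothesis $(4)$ and the displayed isomorphism being $\mathrm{id}_{\overline M}$; hence $K\otimes_{E_*}E_0=0$. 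Since $K$ is a submodule of a bounded-below module it is bounded below, so graded Nakayama gives $K=0$ and $\phi$ is an isomorphism. Thus $M\cong E_*\otimes_{E_0}\overline M$ with $\overline M$ projective, which is precisely $(3)$. (This recovers the familiar statement that a bounded-below projective graded module over a connective graded ring is induced up from degree zero.)

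The only genuine obstacle is keeping the Nakayama arguments honest: one needs connectivity of $E_*$ together with bounded-belowness of $M$ --- hence of $\overline M$ and of $E_*\otimes_{E_0}\overline M$ --- to conclude that a module killed by $-\otimes_{E_*}E_0$ actually vanishes; both hypotheses are essential, and without them $\phi$ can fail to be an isomorphism. A secondary subtlety is that the section $s$ is only $E_0$-linear, so a priori $\phi$ remembers nothing beyond its reduction modulo $\mf m$; it is exactly the $\Tor$-vanishing in $(4)$, fed into the long exact sequence, that promotes it to an honest isomorphism.
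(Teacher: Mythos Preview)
Your proof is correct and is exactly the standard argument; the paper itself states the proposition as ``a standard result'' and gives no proof at all, so there is nothing to compare against. The cycle $(1)\Rightarrow(4)\Rightarrow(3)\Rightarrow(2)\Rightarrow(1)$ with graded Nakayama doing the work in $(4)\Rightarrow(3)$ is precisely what one expects here, and your bookkeeping of the bounded-below hypotheses is careful and accurate.
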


The following is a relatively immediate consequence.
\begin{cor}
  \label{cor:splitinjection}
  Suppose $f\co M \to N$ is a map of projective graded $E_*$-modules, and the map $\bar f\co \overline M \to \overline N$ is a split injective map of graded $E_0$-modules. Then $f$ is a split injective map of graded $E_*$-modules. In particular, $N/M$ is also projective.
\end{cor}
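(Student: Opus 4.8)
The plan is to bootstrap from Proposition~\ref{prop:detectprojectivity}, which tells us that projectivity of a bounded-below graded $E_*$-module is equivalent to being extended from a projective graded $E_0$-module, together with the vanishing of higher $\Tor^{E_*}_{*,*}(-;E_0)$. So the strategy is: first establish that $\bar f$ being split injective lifts to $f$ being split injective, and then deduce projectivity of the cokernel from that splitting.

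First I would use the fact that both $M$ and $N$ are projective, hence (by the proposition) $M \cong E_* \otimes_{E_0} \overline M$ and $N \cong E_* \otimes_{E_0} \overline N$ with $\overline M, \overline N$ projective graded $E_0$-modules. The map $f$ need not itself be extended from a map $\overline M \to \overline N$, but its reduction is $\bar f$, which by hypothesis admits a graded $E_0$-linear retraction $\bar r \co \overline N \to \overline M$ with $\bar r \bar f = \mathrm{id}$. Tensoring up, we get an $E_*$-linear map $\tilde r = E_* \otimes_{E_0} \bar r \co N \to M$. The composite $\tilde r \circ f \co M \to M$ reduces modulo the augmentation ideal $E_{>0}$ to $\bar r \bar f = \mathrm{id}_{\overline M}$. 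The key point is then that an endomorphism of the projective module $M = E_* \otimes_{E_0}\overline M$ which is the identity after applying $E_0 \otimes_{E_*} (-)$ is automatically an isomorphism: writing $\tilde r f = \mathrm{id} + \phi$ where $\phi$ raises degree (lands in $E_{>0}\cdot M$), and using that $M$ is bounded below, one checks degreewise that $\mathrm{id}+\phi$ is invertible — in each fixed degree only finitely many powers of $\phi$ contribute, so the Neumann series $\sum (-\phi)^k$ converges. Composing the retraction $\tilde r$ with $(\tilde r f)^{-1}$ on the appropriate side yields a genuine retraction of $f$, so $f$ is split injective.

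Once $f$ is split injective as a map of graded $E_*$-modules, we have $N \cong M \oplus (N/M)$ as graded $E_*$-modules, so $N/M$ is a retract of the projective module $N$ and hence projective. (Alternatively, and perhaps more cleanly for the write-up, one can verify the fourth criterion of Proposition~\ref{prop:detectprojectivity} directly: the splitting shows $\overline N \cong \overline M \oplus \overline{N/M}$, so $\overline{N/M}$ is a summand of the projective $\overline N$ and is projective; and the long exact sequence in $\Tor^{E_*}_{*,*}(-;E_0)$ associated to $0 \to M \to N \to N/M \to 0$, together with the vanishing of higher Tor for $M$ and $N$ and the injectivity of $\Tor^{E_*}_{0,*}(M;E_0) = \overline M \to \overline N = \Tor^{E_*}_{0,*}(N;E_0)$ — which is exactly the split injectivity of $\bar f$ — forces $\Tor^{E_*}_{p,*}(N/M;E_0) = 0$ for $p>0$.) Either route gives that $N/M$ is projective, and it also records that $N/M$ is bounded below since $M$ and $N$ are.

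The main obstacle is the middle step: lifting the splitting of $\bar f$ to a splitting of $f$. The subtlety is that $f$ itself is not assumed to be extended from $E_0$, so one cannot simply tensor a retraction up; one must instead correct by an automorphism, and this requires the convergence argument for the Neumann series, which is where boundedness-below of the modules is essential. Everything else is formal manipulation of split exact sequences and retracts of projectives.
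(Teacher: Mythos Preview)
Your argument is correct. The paper gives no proof at all---it merely declares the corollary ``a relatively immediate consequence'' of Proposition~\ref{prop:detectprojectivity}---so your write-up is exactly the kind of elaboration one would supply: use the extended form $M \cong E_* \otimes_{E_0} \overline M$, lift a retraction of $\bar f$ by tensoring up, and invoke the graded Nakayama/Neumann-series argument (which is where bounded-belowness enters) to correct the lifted retraction to an honest one. This is the standard route, and your identification of the convergence step as the only nontrivial point is accurate.
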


\begin{prop}
  \label{prop:atiyahhirzebruchprojective}
  Suppose $X$ is a bounded-below spectrum. Then $E_* X$ is a projective $E_*$-module if and only if:
  \begin{itemize}
  \item the ordinary homology $H_*(X; E_0)$ is a projective $E_0$-module, and
  \item the map $E_*(X) \to H_*(X; E_0)$ is surjective.
  \end{itemize}
\end{prop}

\begin{proof}
  If $E_* X$ is projective, then the K\"unneth spectral sequence
  \[
    \Tor^{E_*}_{**}(E_0, E_* X) \Rightarrow H_*(X;E_0),
  \]
  induced by $HE_0 \wedge X \simeq HE_0 \wedge_E (E \wedge X)$, degenerates to an isomorphism
  \[
    H_*(X;E_0) \cong E_0 \otimes_{E_*} E_* X,
  \]
  and so the conditions are clearly necessary. We now show that they are sufficient.
  
  Smashing $X$ with the Postnikov tower of $E$ gives, by boundedness of $X$ and $E$, a convergent Atiyah--Hirzebruch spectral sequence
  \[
    H_p(X; E_q) \Rightarrow E_{p+q} X
  \]
  compatible with the $E_*$-module structure. If $H_*(X;E_0)$ is a projective graded $E_0$-module, then the K\"unneth spectral sequence
  \[
    \Tor^{E_0}_{**}(E_q, H_*(X;E_0)) \Rightarrow H_*(X;E_q)
  \]
  degenerates to an isomorphism
  \[
    H_p(X; E_q) \cong E_q \otimes_{E_0} H_p(X; E_0),
  \]
  and so the spectral sequence is of the form
  \[
    E_q \otimes_{E_0} H_p(X; E_0) \Rightarrow E_{p+q} X.
  \]
  If the map $E_* X \to H_* (X;E_0)$ is surjective, then the elements of $H_*(X;E_0)$ are permanent cycles which generate the left-hand side as an $E_*$-module, and hence the spectral sequence collapses. By projectivity, there are no extension problems as $E_*$-modules.
\end{proof}

\section{Moore spectra}
\begin{prop}
  \label{prop:mooresurjective}
  For any abelian group $A$ with associated Moore spectrum $MA$, the map $E_* MA \to H_*(MA; E_0)$ is surjective.
\end{prop}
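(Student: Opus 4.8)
The plan is to reduce to the case of cyclic groups and then compute directly, using the Atiyah--Hirzebruch spectral sequence (AHSS) as in Proposition~\ref{prop:atiyahhirzebruchprojective}. First I would observe that any abelian group $A$ is a filtered colimit of its finitely generated subgroups, and $MA \simeq \colim M A_\alpha$ over this system; since $E_*(-)$ and $H_*(-;E_0)$ both commute with filtered colimits and surjectivity is preserved under filtered colimits, it suffices to treat finitely generated $A$. By the structure theorem, a finitely generated $A$ is a finite direct sum of copies of $\mb Z$ and cyclic groups $\mb Z/n$, and $M(A\oplus B)\simeq MA\vee MB$, so both homology theories split as direct sums; hence it is enough to handle $A = \mb Z$ and $A = \mb Z/n$ for $n\geq 2$. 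The case $A=\mb Z$ is trivial since $M\mb Z = S^0$ and $E_* \to E_0$ is (split) surjective by the retraction $E_* \to E_0$ fixed in the introduction.

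So the real content is $A = \mb Z/n$, where $M\mb Z/n$ is the cofiber of $n\co S^0 \to S^0$. Here $H_*(M\mb Z/n; E_0)$ is concentrated in degrees $0$ and $1$: it is $E_0/n$ in degree $0$ and $E_0[n]$ (the $n$-torsion subgroup) in degree $1$. The AHSS $H_p(M\mb Z/n; E_q)\Rightarrow E_{p+q}(M\mb Z/n)$ thus has only two nonzero columns $p=0,1$, and I want to show the edge map $E_*(M\mb Z/n)\to H_*(M\mb Z/n;E_0) = E_{0,*}$-column is surjective, i.e.\ that the classes in filtration $0$ (the $p=0$ column, giving $H_0$) and the classes in the $p=1$ column contributing to $H_*(-;E_0)$ survive. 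The quickest route is to use the cofiber sequence directly: the long exact sequence
\[
  E_* S^0 \too{n} E_* S^0 \too E_*(M\mb Z/n) \too \susp E_* S^0 \too{n} \susp E_* S^0
\]
identifies $E_*(M\mb Z/n)$ with an extension of $(E_*/n)$-in-degree-$0$-part, namely $E_*/nE_*$, by $(\ker(n\co E_*\to E_*))$ shifted up by one, namely $\susp E_*[n]$. Comparing with the analogous cofiber sequence for $HE_0$ (i.e.\ $H_*(-;E_0)$), the map $E_*(M\mb Z/n)\to H_*(M\mb Z/n;E_0)$ fits into a map of long exact sequences induced by $E\to HE_0$, and on the two outer terms it is the surjection $E_*/nE_* \to E_0/n$ together with the inclusion-compatible map $E_*[n]\to E_0[n]$.

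The one genuine subtlety is that the map $E_*[n]\to E_0[n]$ need not be surjective on its own, so surjectivity of the middle map does not follow formally from the five lemma. I would resolve this using the splitting $E_* \cong E_0 \oplus (\text{positive-degree part})$ as $E_0$-modules coming from the ring retraction $E_0\to E_*\to E_0$: this splitting is compatible with multiplication by $n$ (an integer), so $E_*[n]\cong E_0[n]\oplus (\text{higher stuff})[n]$ and in particular $E_*[n]\to E_0[n]$ is (split) surjective, and likewise $E_*/nE_* \to E_0/n$ is split surjective. Then a diagram chase in the map of long exact sequences — or equivalently the five lemma applied after noting both outer maps are surjective and the relevant connecting maps match up — gives surjectivity of $E_*(M\mb Z/n)\to H_*(M\mb Z/n; E_0)$. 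The main obstacle is simply being careful that the $E_0$-module splitting of $E_*$ is respected throughout the comparison of cofiber sequences; once that is in hand the argument is a routine diagram chase.
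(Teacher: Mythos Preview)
Your argument is correct, but it is much longer than necessary and takes a genuinely different route from the paper. The paper's proof is one sentence: the map $E_*MA\to H_*(MA;E_0)$ is the edge morphism of the Atiyah--Hirzebruch spectral sequence $H_p(MA;E_q)\Rightarrow E_{p+q}(MA)$, and since a Moore spectrum has nontrivial integral homology only in degrees $0$ and $1$, the spectral sequence has only the two columns $p=0,1$, so every $d_r$ with $r\geq 2$ vanishes for degree reasons and the edge map is surjective. No reduction to finitely generated or cyclic groups is needed.

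By contrast, you reduce to finitely generated $A$, then to cyclic $A$, and then handle $\mb Z/n$ by comparing the long exact sequences of $S^0\xrightarrow{n}S^0\to \mb S/n$ for $E$ and $HE_0$, using the additive splitting $E_*\cong E_0\oplus E_{>0}$ to see that $E_*/n\to E_0/n$ and $E_*[n]\to E_0[n]$ are surjective, and then concluding by a diagram chase. This works, and in fact your cofiber-sequence comparison would apply directly to an arbitrary $A$ via a two-term free resolution $0\to F_1\to F_0\to A\to 0$, so your first two reduction steps are superfluous; the filtered-colimit step in particular is the most delicate (Moore spectra are not strictly functorial in $A$) and is best avoided. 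What the paper's approach buys is brevity and uniformity in $A$; what yours buys is an argument that never touches spectral-sequence edge maps, at the cost of several case distinctions that the sparsity argument handles in one stroke. It is also worth noting that you announce the AHSS at the outset but never actually use it---the degeneration you allude to \emph{is} the whole proof.
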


\begin{proof}
  The map $E_* MA \to H_*(MA; E_0)$ is the edge morphism in the Atiyah--Hirzebruch spectral sequence $H_*(MA;E_*) \Rightarrow E_* MA$. This degenerates due to sparsity; for a Moore spectrum $MA$, only $H_0(MA;-)$ and $H_1(MA;-)$ can be nontrivial.
\end{proof}

\begin{prop}
  \label{prop:mooreprojective}
  If $A$ is finitely generated, then $E_* MA$ is a projective $E_*$-module if and only if $E_0 \otimes A$ is a projective $E_0$-module.
\end{prop}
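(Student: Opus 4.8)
The plan is to deduce this from Propositions~\ref{prop:atiyahhirzebruchprojective} and~\ref{prop:mooresurjective} together with an explicit computation of $H_*(MA;E_0)$. Since Proposition~\ref{prop:mooresurjective} says the map $E_* MA \to H_*(MA;E_0)$ is always surjective, Proposition~\ref{prop:atiyahhirzebruchprojective} immediately reduces the statement to the following: $E_* MA$ is a projective $E_*$-module if and only if $H_*(MA;E_0)$ is a projective graded $E_0$-module. So the task becomes to understand $H_*(MA;E_0)$ well enough to compare its projectivity with that of $E_0\otimes A = H_0(MA;E_0)$.

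To that end I would use the structure theorem for finitely generated abelian groups: write $A \cong \mb Z^r \oplus \bigoplus_i \mb Z/n_i$, so that $MA \simeq \bigl(\bigvee^r S\bigr) \vee \bigvee_i M(\mb Z/n_i)$, a decomposition respected by both $E_*(-)$ and $H_*(-;E_0)$. As $M(\mb Z/n)$ is the cofiber of the self-map $n\co S\to S$, smashing with $HE_0$ and reading off the long exact sequence of $HE_0 \xrightarrow{n} HE_0 \to HE_0\wedge M(\mb Z/n)$ identifies $H_*(M(\mb Z/n);E_0)$ with $E_0/n$ in degree $0$ and the $n$-torsion submodule $E_0[n] = \ker(n\co E_0\to E_0)$ in degree $1$, and nothing else. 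Hence $H_*(MA;E_0)$ is $E_0^r \oplus \bigoplus_i E_0/n_i$ in degree $0$ and $\bigoplus_i E_0[n_i]$ in degree $1$. Because $E_0$ lives in a single degree, a graded $E_0$-module is projective precisely when it is projective in each degree, and a direct sum of $E_0$-modules is projective precisely when every summand is; so $H_*(MA;E_0)$ is projective exactly when every $E_0/n_i$ and every $E_0[n_i]$ is a projective $E_0$-module, while $E_0\otimes A$ is projective exactly when every $E_0/n_i$ is.

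It remains to show that projectivity of the $E_0/n_i$ forces projectivity of the $E_0[n_i]$; this is the only step that uses more than the cited results. I would argue degreewise: if $E_0/(n)$ is projective then the short exact sequence $0 \to nE_0 \to E_0 \to E_0/(n) \to 0$ splits, so $nE_0$ is a direct summand of $E_0$ and in particular projective; multiplication by $n$ is then a surjection $E_0 \twoheadrightarrow nE_0$ with kernel $E_0[n]$, and this sequence splits as well, so $E_0[n]$ is a direct summand of $E_0$ and hence projective. Applying this to each $n_i$: projectivity of $E_0\otimes A$ then gives projectivity of $\bigoplus_i E_0[n_i]$, so $H_*(MA;E_0)$ is projective and therefore $E_* MA$ is projective; the converse direction is automatic, since $E_0\otimes A$ is the degree-$0$ part, hence a direct summand, of $H_*(MA;E_0)$. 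I expect the only real friction to be bookkeeping --- lining up grading conventions so that ``projective graded $E_0$-module'' decouples degree by degree, and checking that the wedge splitting of $MA$ is seen by both homology theories --- while the genuine content beyond the cited propositions is the short two-step splitting argument over $E_0$.
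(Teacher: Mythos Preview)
Your proof is correct and follows essentially the same route as the paper: reduce via the classification of finitely generated abelian groups to cyclic summands, compute $H_*(\mb S/m;E_0)$ from the defining cofiber sequence, and show that projectivity of $E_0/m$ forces projectivity of the $m$-torsion $E_0[m]$, then conclude via Propositions~\ref{prop:atiyahhirzebruchprojective} and~\ref{prop:mooresurjective}. The only cosmetic difference is that the paper extracts an explicit idempotent to identify $E_0[m]\cong E_0/m$ (and also gives a direct argument at the level of $E_*$), whereas you obtain projectivity of $E_0[m]$ by a clean two-step splitting; these are equivalent arguments, and the paper in fact notes your route as an alternative.
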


\begin{proof}
  If $E_* MA$ is a projective $E_*$-module, then it is also flat and this induces a K\"unneth isomorphism $H_*(MA; E_0) \cong E_0 \otimes_{E_*} E_* MA$. As a result, $H_*(MA; E_0)$ is a projective graded $E_0$-module, and in particular $H_0(MA, E_0) \cong E_0 \otimes A$ is a projective $E_0$-module.
  
  Now we need to prove the converse. Since $MA \oplus MB \simeq M(A \oplus B)$, by the classification of finitely generated abelian groups it suffices to prove the case where $A$ is cyclic.

  If $A \cong \mb Z$, then $E_* M\mb Z \cong E_*$ is a projective $E_*$-module and $E_0 \otimes \mb Z \cong E_0$ is a projective $E_0$-module.

  If $A \cong \mb Z/m$, with associated Moore spectrum $\mb S/m$, then we have an exact sequence
  \[
    0 \to H_1(\mb S/m; E_0) \to E_0 \xrightarrow{m} E_0 \to H_0(\mb S/m; E_0) \to 0
  \]
  of $E_0$-modules. If $E_0 \otimes \mb Z/m$ is projective, this last term splits: there is an idempotent $e \in E_0$ such that $m e = 0$ and $e \equiv 1 \mod m$. This gives us a splitting
  \[
    E_0 \cong E_0[1/m] \times E_0 / m
  \]
  of left $E_0$-modules. Our exact sequence therefore determines isomorphisms
  \[
    H_0(\mb S/m; E_0) \cong H_1(\mb S/m; E_0) \cong E_0 / m.
  \]
  In particular, both are projective $E_0$-modules.

  Further, this idempotent gives us a splitting
  \[
    E_* \cong E_*[1/m] \oplus E_*/m
  \]
  of left $E_*$-modules, and the long exact sequence
  \[
    \dots \to E_* \xrightarrow{m} E_* \to E_*(\mb S/m) \to \dots
  \]
  determines a short exact sequence
  \[
    0 \to E_* / m \to E_*(\mb S/m) \to \Sigma E_*/m \to 0
  \]
  of $E_*$-modules. In particular, the outside terms are projective and hence so is $E_* (\mb S/m)$. (Alternatively, we could be less explicit and apply Propositions~\ref{prop:atiyahhirzebruchprojective} and \ref{prop:mooresurjective} to conclude that $E_*(\mb S/m)$ is projective.)
\end{proof}

\section{Filtrations}

\begin{prop}
  \label{prop:projectivefiltration}
  Let $Y^0 \to Y^1 \to Y^2 \to \dots \to Y^n = \ast$ be a sequence of maps of spectra, and write $F^k$ for the fiber of $Y^k \to Y^{k+1}$. Suppose that we have the following properties:
  \begin{itemize}
  \item $E_*(Y^0)$ is a projective $E_*$-module; and
  \item the maps $H_*(Y^k; E_0) \to H_*(Y^{k+1};E_0)$ on ordinary homology are split surjections of graded $E_0$-modules.
  \end{itemize}
  Then the sequences $0 \to H_*(F^k;E_0) \to H_*(Y^{k};E_0) \to H_*(Y^{k+1};E_0) \to 0$ are split exact sequences of projective graded $E_0$-modules. If, in addition, we have that
  \begin{itemize}
  \item the maps $E_* F^k \to H_*(F^k; E_0)$ are surjective,
  \end{itemize}
  then the sequences $0 \to E_*F^k \to E_*Y^{k} \to E_*Y^{k+1} \to 0$ are split exact sequences of projective $E_*$-modules.
\end{prop}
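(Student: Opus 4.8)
The plan is to prove the two assertions in turn, running an upward induction along the tower and feeding the cofiber sequences $F^k \to Y^k \to Y^{k+1}$ into the recognition results of the previous sections.

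For the homological statement, apply $H_*(-;E_0)$ to each cofiber sequence. The connecting maps $H_*(Y^{k+1};E_0) \to H_{*-1}(F^k;E_0)$ in the resulting long exact sequences vanish because the maps $H_*(Y^k;E_0) \to H_*(Y^{k+1};E_0)$ are assumed surjective; hence we get short exact sequences $0 \to H_*(F^k;E_0) \to H_*(Y^k;E_0) \to H_*(Y^{k+1};E_0) \to 0$, and a chosen section of the surjection splits each of them. For projectivity, note that Proposition~\ref{prop:atiyahhirzebruchprojective} applied to $Y^0$ shows $H_*(Y^0;E_0)$ is projective. Inducting upward, each $H_*(Y^{k+1};E_0)$ is a retract of $H_*(Y^k;E_0)$ via the split surjection, hence projective; and then each $H_*(F^k;E_0)$ is a direct summand of the projective module $H_*(Y^k;E_0)$, hence projective.

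For the statement about $E$-homology, first observe that $H_*(F^k;E_0)$ is projective (just proved) and that $E_*F^k \to H_*(F^k;E_0)$ is surjective by the extra hypothesis, so $E_*F^k$ is a projective $E_*$-module by Proposition~\ref{prop:atiyahhirzebruchprojective}. Now induct on $k$ with the assertion that $E_*Y^k$ is projective, the case $k=0$ being one of the hypotheses. In the inductive step, $E_*F^k$ and $E_*Y^k$ are projective and hence flat, so the K\"unneth edge maps give natural isomorphisms $\overline{E_*F^k} \cong H_*(F^k;E_0)$ and $\overline{E_*Y^k} \cong H_*(Y^k;E_0)$; under these, the map $\overline{E_*F^k} \to \overline{E_*Y^k}$ is identified with the split injection $H_*(F^k;E_0) \to H_*(Y^k;E_0)$ produced in the first part. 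Corollary~\ref{cor:splitinjection} then shows that $E_*F^k \to E_*Y^k$ is a split injection of projective graded $E_*$-modules with projective cokernel. Plugging this back into the long exact sequence in $E_*(-)$, the connecting map vanishes, so $0 \to E_*F^k \to E_*Y^k \to E_*Y^{k+1} \to 0$ is short exact; it is split because the inclusion is, and it identifies $E_*Y^{k+1}$ with the cokernel, which is projective. This closes the induction and, along the way, yields all the asserted splittings and projectivity statements.

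The point needing the most care is the identification of $\overline{E_*F^k} \to \overline{E_*Y^k}$ with the homological map in the inductive step: this requires both $E_*F^k$ and $E_*Y^k$ to be flat so that $E_0\otimes_{E_*}(-)$ actually computes ordinary homology, which is exactly why the projectivity of $E_*Y^k$ has to be carried along inside the induction rather than extracted at the end. One should also check that Proposition~\ref{prop:atiyahhirzebruchprojective} is only ever invoked for bounded-below spectra here (namely $Y^0$ and the $F^k$), a condition that holds in the setting where this proposition is applied.
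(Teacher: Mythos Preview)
Your proof is correct and follows essentially the same route as the paper's: establish the $E_0$-homology statement by induction using Proposition~\ref{prop:atiyahhirzebruchprojective} to start and the split surjection hypothesis to propagate, then deduce projectivity of each $E_*F^k$ from Proposition~\ref{prop:atiyahhirzebruchprojective} and run an induction applying Corollary~\ref{cor:splitinjection}. The only difference is that you spell out more carefully the identification of $\overline{E_*F^k}\to\overline{E_*Y^k}$ with the $E_0$-homology map via the K\"unneth isomorphism and the passage from split injectivity back through the long exact sequence, steps the paper leaves implicit; your caveat about the bounded-below hypothesis needed to invoke Proposition~\ref{prop:atiyahhirzebruchprojective} is also a fair observation.
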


\begin{proof}
  Proposition~\ref{prop:atiyahhirzebruchprojective} shows that $H_*(Y^0; E_0)$ is a projective graded $E_0$-module, and the split surjection criterion implies that
  \[
    0 \to H_*(F^k; E_0) \to H_*(Y^k;E_0) \to H_*(Y^{k+1};E_0) \to 0
  \]
  is always a split exact sequence of graded $E_0$-modules. In particular, by induction on $k$ we find that $H_*(F^k;E_0)$ and $H_*(Y^k;E_0)$ are projective graded $E_0$-modules.

  If we additionally know that $E_*(F^k) \to H_*(F^k;E_0)$ is surjective, then the modules $E_*(F^k)$ are projective by Proposition~\ref{prop:atiyahhirzebruchprojective}. Applying Corollary~\ref{cor:splitinjection} inductively, we find that $0 \to E_*F^k \to E_*Y^{k} \to E_*Y^{k+1} \to 0$ are split exact sequences of projective $E_*$-modules.
\end{proof}

\begin{cor}
  Suppose $E$ is commutative and $Y^0 \to Y^1 \to Y^2 \to \dots \to Y^n = \ast$ is a sequence of maps of spectra satisfying the above three criteria. Then the synthetic analogues form cofiber sequences $\nu(F^k) \to \nu(Y^k) \to \nu(Y^{k+1})$ in $\Syn_E$.
\end{cor}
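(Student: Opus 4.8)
The plan is to deduce this Corollary from the preceding Proposition about projective filtrations together with the basic structure of the synthetic functor $\nu$. Recall that $\nu\colon \Sp \to \Syn_E$ sends a spectrum to its "synthetic analogue," and its defining property is that $\nu$ preserves finite limits always, but only sends a cofiber sequence $A \to B \to C$ of spectra to a cofiber sequence $\nu(A) \to \nu(B) \to \nu(C)$ when the sequence is "$E_*$-exact," i.e. when the long exact sequence in $E_*$-homology breaks into short exact sequences — equivalently when $E_* A \to E_* B$ is injective (so that $0 \to E_*A \to E_*B \to E_*C \to 0$ is exact). This is the key input from \cite{pstragowski-synthetic}.

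So first I would invoke Proposition~\ref{prop:projectivefiltration}: the three hypotheses on the tower $Y^0 \to \dots \to Y^n = \ast$ are precisely the three bullet-point criteria, and the Proposition's conclusion is that each $0 \to E_*F^k \to E_*Y^k \to E_*Y^{k+1} \to 0$ is a (split) short exact sequence of $E_*$-modules. In particular, for each $k$ the defining cofiber sequence $F^k \to Y^k \to Y^{k+1}$ of spectra is $E_*$-exact, since $E_* F^k \to E_* Y^k$ is (split) injective. Second, I would quote the relevant property of $\nu$ — that $\nu$ carries an $E_*$-exact cofiber sequence of spectra to a cofiber sequence in $\Syn_E$ — to conclude that $\nu(F^k) \to \nu(Y^k) \to \nu(Y^{k+1})$ is a cofiber sequence in $\Syn_E$ for each $k$. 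Since the tower is finite there is nothing further to assemble; the statement is exactly these $n$ cofiber sequences.

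**The main obstacle** is essentially bookkeeping rather than mathematics: one must make sure the "$\nu$ preserves $E_*$-exact cofiber sequences" statement is cited in the precise form needed (in \cite{pstragowski-synthetic} this is typically phrased in terms of $\nu$ sending certain maps to cofiber sequences, or equivalently in terms of the $t$-structure and the fact that $\nu(X)$ has homotopy sheaves computed by $E_*X$). I would also note explicitly why commutativity of $E$ is invoked here but was not needed in Proposition~\ref{prop:projectivefiltration}: the synthetic category $\Syn_E$ and the functor $\nu$ as set up in \cite{pstragowski-synthetic} require $E$ to be a commutative (or at least $\mathbb{E}_1$ with enough structure) ring spectrum for the symmetric-monoidal Day-convolution construction, whereas the homological algebra of the earlier sections only used the associative structure.

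\begin{proof}
  By Proposition~\ref{prop:projectivefiltration}, the three hypotheses guarantee that for each $k$ the sequence
  \[
    0 \to E_* F^k \to E_* Y^k \to E_* Y^{k+1} \to 0
  \]
  is a split short exact sequence of $E_*$-modules. In particular $E_* F^k \to E_* Y^k$ is injective, so the cofiber sequence $F^k \to Y^k \to Y^{k+1}$ of spectra induces a short exact sequence on $E_*$-homology. A cofiber sequence with this property is sent by the synthetic analogue functor $\nu$ to a cofiber sequence in $\Syn_E$ (see \cite{pstragowski-synthetic}), since $\nu$ always preserves finite limits and converts such $E_*$-injective cofiber sequences into cofiber sequences. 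Hence $\nu(F^k) \to \nu(Y^k) \to \nu(Y^{k+1})$ is a cofiber sequence in $\Syn_E$ for every $k$, as claimed.
\end{proof}
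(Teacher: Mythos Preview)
Your proof is correct and follows the same route as the paper: invoke Proposition~\ref{prop:projectivefiltration} to see that each fiber sequence $F^k \to Y^k \to Y^{k+1}$ is $E_*$-exact, then cite \cite[4.23]{pstragowski-synthetic} to conclude that $\nu$ carries it to a (co)fiber sequence in $\Syn_E$. The paper's version is simply terser, leaving the appeal to the preceding Proposition implicit.
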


\begin{proof}
  By \cite[4.23]{pstragowski-synthetic}, a fiber sequence $F^k \to Y^k \to Y^{k+1}$ which is $E_*$-exact becomes a fiber sequence of $\nu(F^k) \to \nu(Y^k) \to \nu(Y^{k+1})$ in $\Syn_E$.
\end{proof}

  Any finite spectrum $X$, with integral homology concentrated in degrees $n$ through $m$, has a ``Moore filtration''
  \[
    X = X^n \to X^{n+1} \to \dots \to X^m \to \ast
  \]
  such that the fiber $F^k = fib(X^k \to X^{k+1})$ is a Moore spectrum $\Sigma^k M(H_k(X))$.
\begin{cor}
  \label{cor:moorefiltration}
  Let $X$ be a finite spectrum with a Moore filtration $X^n \to X^{n+1} \to \dots \to X^m \to \ast$, with fibers $F^k \simeq \Sigma^k M(H_k X)$.

  If $E_*(X^0)$ is a projective $E_*$-module, then the sequences $0 \to E_*F^k \to E_*X^{k} \to E_*X^{k+1} \to 0$ are split exact sequences of projective $E_*$-modules, and $\nu(F^k) \to \nu(X^k) \to \nu(X^{k+1})$ are cofiber sequences in $\Syn_E$.
\end{cor}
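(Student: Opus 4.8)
The plan is to reduce the statement to Proposition~\ref{prop:projectivefiltration} and the corollary immediately following it, applied to the Moore filtration $X = X^n \to X^{n+1} \to \dots \to X^m \to \ast$. Concretely, I would verify the three hypotheses of Proposition~\ref{prop:projectivefiltration} in this situation. The first, that $E_*(X^0)$ (which is $E_*(X)$) is projective, is exactly our hypothesis. The third, that each map $E_*(F^k) \to H_*(F^k; E_0)$ is surjective, is immediate from Proposition~\ref{prop:mooresurjective}, since $F^k \simeq \Sigma^k M(H_k X)$ is a shifted Moore spectrum. So all the work is in the second hypothesis: that each $H_*(X^k; E_0) \to H_*(X^{k+1}; E_0)$ is a split surjection of graded $E_0$-modules.

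To verify this I would first observe that, since $E_*(X)$ is projective, Proposition~\ref{prop:atiyahhirzebruchprojective} makes $H_*(X; E_0)$ a projective graded $E_0$-module, so each $H_n(X; E_0)$ is a projective $E_0$-module. The universal coefficient theorem supplies, for every $n$, a (non-canonically) split short exact sequence of $E_0$-modules
\[ 0 \to E_0 \otimes H_n(X) \to H_n(X;E_0) \to \Tor^{\mb Z}_1(E_0, H_{n-1}(X)) \to 0, \]
exhibiting each $E_0 \otimes H_j(X)$ and each $\Tor^{\mb Z}_1(E_0, H_j(X))$ as a direct summand of a projective $E_0$-module; hence all of these are themselves projective over $E_0$.

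Next I would feed in the structure of the Moore filtration: $H_*(X^k; \mb Z)$ agrees with $H_*(X;\mb Z)$ in degrees $\ge k$ and vanishes below, and $F^k \to X^k$ induces on integral homology an isomorphism in degree $k$ and a map out of the zero group in every other degree (the integral homology of a Moore spectrum is concentrated in a single degree). Consequently $E_0 \otimes (-)$ and $\Tor^{\mb Z}_1(E_0, -)$ carry this map to injections in every degree, and by naturality of the universal coefficient sequence the map $F^k \to X^k$ is then injective on $H_*(-; E_0)$ as well. Therefore the long exact $E_0$-homology sequence of the fiber sequence $F^k \to X^k \to X^{k+1}$ breaks up into short exact sequences
\[ 0 \to H_*(F^k; E_0) \to H_*(X^k; E_0) \to H_*(X^{k+1}; E_0) \to 0. \]
Moreover the universal coefficient theorem computes $H_*(X^{k+1}; E_0)$ as a direct sum of modules of the form $E_0 \otimes H_j(X)$ and $\Tor^{\mb Z}_1(E_0, H_j(X))$, all projective over $E_0$ by the previous paragraph; so this surjection onto a projective module splits, establishing the second hypothesis.

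With the three hypotheses verified, Proposition~\ref{prop:projectivefiltration} gives that $0 \to E_*F^k \to E_*X^k \to E_*X^{k+1} \to 0$ are split exact sequences of projective $E_*$-modules, and the corollary following it gives the cofiber sequences $\nu(F^k) \to \nu(X^k) \to \nu(X^{k+1})$ in $\Syn_E$. I expect the main obstacle to be the third paragraph — converting the purely integral-homology description of the Moore filtration into the split-surjectivity statement on $E_0$-homology — where the essential leverage is that projectivity of $E_*(X)$ is rigid enough to force every $E_0 \otimes H_j(X)$ and every $\Tor^{\mb Z}_1(E_0, H_j(X))$ to be projective over $E_0$.
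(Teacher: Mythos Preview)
Your proof is correct and follows essentially the same approach as the paper: reduce to Proposition~\ref{prop:projectivefiltration} (and its corollary) by verifying its three hypotheses, handling the third via Proposition~\ref{prop:mooresurjective}. The only difference is in checking split surjectivity on $E_0$-homology: the paper observes directly that the integral-homology maps $H_*(X^k)\to H_*(X^{k+1})$ are degreewise isomorphism-or-zero and then invokes the $E_0$-linearity of the UCT splitting (naturality in the coefficient group) to transport this, whereas you take the slightly longer route of first using the projectivity hypothesis to show $H_*(X^{k+1};E_0)$ is projective and then splitting the surjection that way---both arguments are valid.
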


\begin{proof}
  On integral homology, the map $H_*(X^k) \to H_*(X^{k+1})$ is a split surjection (explicitly, the kernel is the degree-$k$ part $H_k(X)$). The universal coefficient theorem (in particular, that its splitting is natural in the coefficient group) then implies that $H_*(X^k;E_0) \to H_*(X^{k+1};E_0)$ is a split surjection of $E_0$-modules. Applying Proposition~\ref{prop:projectivefiltration} and Proposition~\ref{prop:mooresurjective}, we arrive at the result.
\end{proof}

\section{Cellularity of Moore spectra}

Throughout this section we assume that $E$ is commutative so that we can work with the category $\Syn_E$ of synthetic spectra.

\begin{lem}
  \label{lem:localizationcellular}
  For any flat $\mb Z$-module $A$, the synthetic analogue $\nu(MA)$ of a Moore spectrum is cellular.
\end{lem}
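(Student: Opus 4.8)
The plan is to exhibit $MA$ as a filtered colimit of finite wedges of sphere spectra and to carry that colimit through $\nu$.

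First I would use that a flat $\mb Z$-module is torsion-free, and is therefore the filtered union of its finitely generated subgroups; over the principal ideal domain $\mb Z$ each such subgroup is free. Fixing such an expression $A \simeq \colim_\alpha A_\alpha$ with $A_\alpha \cong \mb Z^{n_\alpha}$, the spectrum $\colim_\alpha M A_\alpha \simeq \colim_\alpha \mb S^{\oplus n_\alpha}$ is connective and, since homology commutes with filtered colimits, has $H_0 \cong \colim_\alpha \mb Z^{n_\alpha} \cong A$ with $H_i = 0$ for $i \ne 0$; hence it is a model for $MA$. Thus $MA \simeq \colim_\alpha MA_\alpha$ is a filtered colimit of finite wedges of spheres.

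The heart of the argument is that $\nu$ commutes with this filtered colimit. Recall from \cite{pstragowski-synthetic} that $\Syn_E$ sits inside the functor category $\Fun((\mathcal{S}p^{\mathrm{fp}}_E)^{\op}, \Sp)$ as the full subcategory of functors which preserve finite products and send the $E_*$-short-exact cofiber sequences to fiber sequences. These are finite-limit conditions, so $\Syn_E$ is closed under filtered colimits and they are computed objectwise; and since $\nu(X)$ is the functor $P \mapsto \tau_{\ge 0}\map(P, X)$, with both $\tau_{\ge 0}$ and $\map(P, -)$ (for $P$ a finite spectrum) preserving filtered colimits, it follows that $\nu(\colim_\alpha MA_\alpha) \simeq \colim_\alpha \nu(MA_\alpha)$. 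I expect this to be the step requiring the most care: pinning down the pointwise description of $\nu$ and checking that the conditions cutting out $\Syn_E$ genuinely survive filtered colimits, rather than working with the sheaf-theoretic description, where the relevant limits need not be finite.

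Finally, $\nu$ preserves finite coproducts, so $\nu(MA_\alpha) \simeq \bigvee_{n_\alpha} \nu(\mb S) = \bigvee_{n_\alpha} S^{0,0}$ is a finite wedge of copies of the bigraded sphere $S^{0,0}$, hence cellular. Cellular objects are closed under arbitrary colimits, so $\nu(MA) \simeq \colim_\alpha \nu(MA_\alpha)$ is cellular, as claimed.
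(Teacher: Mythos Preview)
Your argument is correct, and it takes a genuinely different route from the paper. The paper chooses a two-term free resolution $0 \to \bigoplus_J \mb Z \to \bigoplus_I \mb Z \to A \to 0$, lifts it to a cofiber sequence $\bigoplus_J \mb S \to \bigoplus_I \mb S \to MA$, and then uses flatness of $A$ to see that $\Tor(E_*,A)=0$, so that the long exact sequence in $E$-homology is short exact; invoking \cite[4.23]{pstragowski-synthetic} then gives a cofiber sequence $\bigoplus_J \nu(\mb S) \to \bigoplus_I \nu(\mb S) \to \nu(MA)$ in $\Syn_E$. You instead write $A$ as a filtered colimit of finite free subgroups and push that colimit through $\nu$ directly. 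Both arguments ultimately rest on the same foundational input---$\nu$ commutes with filtered colimits (hence with arbitrary coproducts), which the paper uses implicitly to identify $\nu(\bigoplus_I \mb S)$ with $\bigoplus_I \nu(\mb S)$---but your version bypasses the $E_*$-exactness check and the appeal to \cite[4.23]{pstragowski-synthetic}, at the cost of having to justify that filtered-colimit preservation carefully. The paper's version has the virtue of staying within the same toolkit used elsewhere in the argument, and it yields a slightly sharper statement: $\nu(MA)$ is a two-step extension of wedges of $S^{0,0}$, not merely a filtered colimit of such. Your worry about sheafification is not really an obstacle: even if $P \mapsto \tau_{\geq 0}\map(P,X)$ required sheafification, the sheafification functor is a left adjoint and already commutes with all colimits, so the identification $\nu(\colim_\alpha X_\alpha)\simeq \colim_\alpha \nu(X_\alpha)$ goes through regardless.
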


\begin{proof}
  We can construct $MA$ from a resolution $0 \to \oplus_J \mb Z \to \oplus_I \mb Z \to A \to 0$ of abelian groups by lifting it to a cofiber sequence $\oplus_J \mb S \to \oplus_I \mb S \to MA$. On $E$-homology, we get a long exact sequence
  \[
    \dots \to \oplus_J E_* \to \oplus_I E_* \to E_* MA \to \dots
  \]
  However, the kernel of the first map is $\Tor(E_*, A)$, which is trivial, and so the above is actually a short exact sequence. Therefore, the cofiber sequence is preserved by $\nu$ by \cite[4.23]{pstragowski-synthetic}, and so we have a cofiber sequence
  \[
    \oplus_I \nu(\mb S) \to \oplus_J \nu(\mb S) \to \nu(MA)
  \]
  As a result, $\nu(MA)$ is cellular.
\end{proof}

\begin{lem}
  \label{lem:quotientcellular}
  Suppose that $m$ is an integer such that $E_0/m$ is a projective
  $E_0$-module. Then the synthetic spectrum $\nu(\mb S/m)$ is cellular.
\end{lem}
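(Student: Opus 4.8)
The plan is to build $\nu(\mb S/m)$ from the bigraded spheres by comparing $\mb S/m$ with the localization $\mb S[1/m]$, whose synthetic analogue $\nu(\mb S[1/m]) = \nu(M\mb Z[1/m])$ is cellular by Lemma~\ref{lem:localizationcellular} (since $\mb Z[1/m]$ is flat over $\mb Z$). By Proposition~\ref{prop:mooreprojective}, $E_*(\mb S/m)$ is projective; more to the point, the proof of that proposition produces an idempotent $e \in E_0$ with $me = 0$ and $e \equiv 1 \bmod m$, splitting $E_* = eE_* \oplus (1-e)E_*$ with $m$ acting as zero on $eE_*$ and invertibly on $(1-e)E_*$. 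The single consequence I need is that on $E_*$-homology the localization map $\mb S \to \mb S[1/m]$ is the split surjection $E_* \to (1-e)E_* = E_*[1/m]$ with kernel the $m$-torsion submodule $eE_*$; without the hypothesis this map is not a split surjection, so this is exactly where the assumption is used.

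First I would handle the Moore spectrum $M(\mb Z/m^\infty) \simeq \mathrm{cofib}(\mb S \to \mb S[1/m])$. Put $W = \mathrm{fib}(\mb S \to \mb S[1/m]) \simeq \Sigma^{-1}M(\mb Z/m^\infty)$. By the previous paragraph the fiber sequence $W \to \mb S \to \mb S[1/m]$ is $E_*$-exact (on $E_*$-homology it is $0 \to eE_* \to E_* \to (1-e)E_* \to 0$), so by \cite[4.23]{pstragowski-synthetic} it is carried by $\nu$ to a fiber sequence $\nu(W) \to \nu(\mb S) \to \nu(\mb S[1/m])$ in $\Syn_E$. As $\nu(\mb S) = S^{0,0}$ and $\nu(\mb S[1/m])$ are cellular, so is $\nu(W)$, and hence so is $\nu(M(\mb Z/m^\infty)) \simeq \Sigma^{1,1}\nu(W)$.

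Next I would relate $\mb S/m$ to $M(\mb Z/m^\infty)$. Let $\psi \co \mb S \to \mb S[1/m]$ realize $1/m \in \mb Z[1/m] = \pi_0 \mb S[1/m]$, so that $\psi \circ m = \mathrm{loc}$; then $(\mathrm{id}, \psi)$ is a map of two-term cofiber sequences from $\mb S \xrightarrow{m} \mb S$ to $\mb S \xrightarrow{\mathrm{loc}} \mb S[1/m]$. The octahedral axiom gives an induced map $a \co \mb S/m \to M(\mb Z/m^\infty)$ on cofibers whose own cofiber is $M(\mb Z/m^\infty)$ (the cofiber of $\mathrm{id}\co\mb S\to\mb S$ being zero); equivalently, since $\mathrm{cofib}(a) \simeq M(\mb Z/m^\infty)$, a cofiber sequence $W \to \mb S/m \xrightarrow{a} M(\mb Z/m^\infty)$ with $W \simeq \Sigma^{-1}M(\mb Z/m^\infty)$ as before. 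The claim is that this sequence is $E_*$-exact, and the crux is that $a$ is surjective on $E_*$-homology. This follows because the connecting map $\partial \co \mb S/m \to \Sigma\mb S$ factors as $\partial' \circ a$, where $\partial' \co M(\mb Z/m^\infty) \to \Sigma\mb S$ is the connecting map of $\mb S \to \mb S[1/m] \to M(\mb Z/m^\infty)$; on $E_*$-homology $\partial'$ is injective with image $eE_{*-1} = \ker(m \co E_{*-1} \to E_{*-1})$, and this is exactly the image of $\partial$. Surjectivity of $a_*$ then forces the third map of $\mb S/m \xrightarrow{a} M(\mb Z/m^\infty) \to M(\mb Z/m^\infty)$ to induce zero on $E_*$ (long exact sequence), which in turn makes $W \to \mb S/m$ injective on $E_*$; so $W \to \mb S/m \to M(\mb Z/m^\infty)$ is $E_*$-exact. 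Applying \cite[4.23]{pstragowski-synthetic} once more makes $\nu(W) \to \nu(\mb S/m) \to \nu(M(\mb Z/m^\infty))$ a fiber sequence in $\Syn_E$ between cellular objects, whence $\nu(\mb S/m)$ is cellular.

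The routine parts are the $E_*$-homology computations for $\mb S/m$ and $M(\mb Z/m^\infty)$ from their defining cofiber sequences, along with the formal facts that cellular objects are closed under fibers and (de)suspensions and that $\nu$ sends $\Sigma$ to $\Sigma^{1,1}$. The step needing genuine care is the $E_*$-exactness of $W \to \mb S/m \to M(\mb Z/m^\infty)$: what must be pinned down is that the direct summand of $E_*(\mb S/m)$ complementary to $E_*(M(\mb Z/m^\infty))$ is precisely $E_*(W)$, and this depends on carefully tracking the connecting maps in the octahedron for $\mb S \xrightarrow{m} \mb S \xrightarrow{\psi} \mb S[1/m]$.
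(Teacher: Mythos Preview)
Your argument is correct and follows essentially the same route as the paper: first show $\nu(\mb S[1/m])$ is cellular, then deduce cellularity of $\nu(\Sigma^{-1}\mb S/m^\infty)$ from $E_*$-exactness of $\Sigma^{-1}\mb S/m^\infty \to \mb S \to \mb S[1/m]$, and finally deduce cellularity of $\nu(\mb S/m)$ from $E_*$-exactness of $\Sigma^{-1}\mb S/m^\infty \to \mb S/m \to \mb S/m^\infty$. The only difference is presentational: the paper simply invokes this last fiber sequence and asserts that on $E_*$ it becomes $0 \to E_*/m \to E_*(\mb S/m) \to \Sigma E_*/m \to 0$, whereas you build the sequence explicitly via the octahedron for $\mb S \xrightarrow{m} \mb S \xrightarrow{\psi} \mb S[1/m]$ and verify surjectivity of $a_*$ by tracking the connecting maps---a more detailed justification of the same fact.
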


\begin{proof}
  The object $\nu(\mb S[1/m])$ is cellular by Lemma~\ref{lem:localizationcellular}. The fiber sequence
  \[
    \Sigma^{-1} \mb S/m^\infty \to \mb S \to \mb S[1/m],
  \]
  upon $E_*$, becomes a long exact sequence including the maps
  \[
    \Sigma^{-1} E_*(\mb S/m^\infty) \to E_* \to E_*[1/m].
  \]
  However, recall from Proposition~\ref{prop:mooreprojective} that we have a splitting
  \[
    E_* \cong E_*[1/m] \times E_*/m.
  \]
  The map $E_* \to E_*[1/m]$ is then identified with the projection onto a split summand; hence the first term is identified with the complementary summand $E_*/m$, and this is a short exact sequence on $E_*$. Therefore, the sequence
  \[
    \nu(\Sigma^{-1} \mb S/m^\infty) \to \nu(\mb S) \to \nu(\mb S[1/m])
  \]
 is a fiber sequence. The second two terms are cellular, and hence so is the first.

  Finally, the fiber sequence
  \[
    \Sigma^{-1} \mb S/m^\infty \to \mb S/m \to \mb S/m^\infty
  \]
  becomes, on $E_*$, a short exact seqence
  \[
    0 \to E_* / m \to E_*(\mb S/m) \to \Sigma E_*/m \to 0
  \]
  and therefore by \cite[4.23]{pstragowski-synthetic} we have a fiber sequence 
  \[
    \nu(\Sigma^{-1} \mb S/m^\infty) \to \nu(\mb S/m) \to \nu(\mb S/m^\infty).
  \]
  The outer two terms have just been shown to be cellular, and hence so is the middle term, as desired.
\end{proof}

\begin{cor}
  \label{cor:fingen}
  Suppose that $A$ is a finitely generated abelian group such that $E_0 \otimes A$ is projective over $E_0$. Given a Moore spectrum $MA$, then $E_*(MA)$ is a projective $E_*$-module and $\nu(\Sigma^k MA)$ is cellular.
\end{cor}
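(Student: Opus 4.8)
The plan is to deduce the whole statement from the results already established. The projectivity claim is immediate: since $E_0 \otimes A$ is projective by hypothesis, Proposition~\ref{prop:mooreprojective} shows that $E_*(MA)$ is a projective $E_*$-module, and therefore so is $E_*(\Sigma^k MA) \cong \Sigma^k E_*(MA)$. For the cellularity of $\nu(\Sigma^k MA)$, I would first dispose of the suspension and then reduce to the case where $A$ is cyclic. Note that each cyclic summand $A'$ appearing in a direct sum decomposition $A \cong \mb Z^r \oplus \mb Z/m_1 \oplus \cdots \oplus \mb Z/m_s$ (classification of finitely generated abelian groups) has $E_0 \otimes A'$ a direct summand of the projective module $E_0 \otimes A$, hence itself projective, so the hypothesis passes to each summand.

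Next, since $M(B \oplus C) \simeq MB \oplus MC$ and $\nu$ preserves finite direct sums — for instance, apply \cite[4.23]{pstragowski-synthetic} to the split, $E_*$-exact cofiber sequence $MB \to M(B\oplus C) \to MC$ and use functoriality of $\nu$ to split the resulting cofiber sequence in $\Syn_E$ — and since the cellular objects are closed under finite direct sums, it suffices to show $\nu(MA')$ is cellular for $A'$ cyclic. If $A' \cong \mb Z$ then $MA' \simeq \mb S$ and $\nu(\mb S)$ is the bigraded sphere $S^{0,0}$, which is cellular as it is one of the generators. If $A' \cong \mb Z/m$, then $E_0 \otimes \mb Z/m = E_0/m$ is projective, so Lemma~\ref{lem:quotientcellular} applies and $\nu(\mb S/m)$ is cellular. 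This shows $\nu(MA)$ is cellular.

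Finally, to pass from $\nu(MA)$ to $\nu(\Sigma^k MA)$ one must be a little careful, since $\nu$ does not commute with the categorical suspension. The fix is that $E_* S^k$ is free, hence flat, so the lax monoidal comparison map $\nu(S^k) \otimes \nu(MA) \to \nu(S^k \wedge MA)$ is an equivalence; thus $\nu(\Sigma^k MA) \simeq S^{k,k} \otimes \nu(MA)$. Tensoring with the invertible object $S^{k,k}$ is a colimit-preserving self-equivalence of $\Syn_E$ carrying each sphere $S^{i,j}$ to $S^{i+k,j+k}$, so it restricts to an auto-equivalence of the cellular subcategory; hence $\nu(\Sigma^k MA)$ is cellular. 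I expect the only real friction to lie in this last comparison (and in the bookkeeping of the reduction to cyclic groups); everything else is a direct appeal to Proposition~\ref{prop:mooreprojective}, Lemma~\ref{lem:quotientcellular}, and the behaviour of $\nu$ on $E_*$-exact cofiber sequences.
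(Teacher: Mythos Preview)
Your proof is correct and follows essentially the same route as the paper: reduce via the classification of finitely generated abelian groups to cyclic summands, invoke Lemma~\ref{lem:localizationcellular} and Lemma~\ref{lem:quotientcellular} summand-by-summand, and handle suspensions via $\nu(\Sigma^k MA)\simeq S^{k,k}\otimes\nu(MA)$. Your write-up simply fills in more of the details (that the hypothesis passes to summands, that $\nu$ preserves finite sums, and the justification of the suspension comparison) than the paper's terse proof does.
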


\begin{proof}
  To prove that $\nu(MA)$ is cellular, we can use the classification of finitely generated abelian groups and apply the previous lemmas summand-by-summand. To prove that $\nu(\Sigma^k MA)$ is cellular, we recall that $\nu(\Sigma^k MA) \simeq \Sigma^{k,k} \nu(MA)$, and so this follows from cellularity of $\nu(MA)$.
\end{proof}

\section{Cellularity of synthetic spectra}

We are now ready to prove that synthetic spectra are generated by bigraded spheres.

\begin{proof}[Proof of Theorem~\ref{thm:syntheticcellular}]
  (cf. \cite[6.2]{pstragowski-synthetic}) The category $\Syn_E$ is a sheaf category, and so generated under homotopy colimits by the Yoneda image: objects of the form $\nu(X)$ where $X$ is finite and $E_* X$ is projective. It therefore suffices to prove that such $\nu(X)$ are cellular.

By Corollary~\ref{cor:moorefiltration}, every such synthetic spectrum spectrum $\nu(X)$ has a finite filtration whose subquotients are of the form $\nu(\Sigma^k MA)$ where $A$ is a finitely generated abelian group with $E_0 \otimes A$ a projective graded $E_0$-module.

Finally, by Corollary~\ref{cor:fingen}, such Moore spectra $\nu(\Sigma^k MA)$ are cellular.
\end{proof}

\begin{cor}
  The category of $E$-synthetic spectra is equivalent to the category of left modules over a $\mb Z$-graded spectrum $R_\bullet\simeq \map_{\Syn_E}(S^{0,\bullet}, S^{0,0})$.
\end{cor}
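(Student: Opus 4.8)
The plan is to apply the Schwede–Shipley theorem to the stable, presentable $\infty$-category $\Syn_E$, using the bigraded spheres produced by Theorem~\ref{thm:syntheticcellular} as compact generators. The essential input is that $\Syn_E$ is a stable presentable $\infty$-category and that the bigraded spheres $S^{i,j}$ are compact and generate it under colimits; Theorem~\ref{thm:syntheticcellular} supplies exactly this. Once we have a \emph{single} compact generator, Schwede–Shipley identifies $\Syn_E$ with left modules over its endomorphism ring spectrum; the bookkeeping then just records that the resulting ring spectrum is naturally $\mb Z$-graded, because the bigraded spheres in each Adams-filtration weight are all shifts of one another.

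First I would observe that all the bigraded spheres are obtained from the weight-$j$ spheres $S^{0,j}$ by (ordinary) suspension: $S^{i,j} \simeq \Sigma^i S^{0,j}$, since $\Sigma^{1,0}$ is the ordinary suspension in the stable category $\Syn_E$. Hence the set $\{S^{0,j}\}_{j \in \mb Z}$ already generates under colimits (and desuspensions, which exist since $\Syn_E$ is stable), and each $S^{0,j}$ is compact. Therefore $G = \bigoplus_{j \in \mb Z} S^{0,j}$ — or, to stay within compact objects, one works generator-by-generator — exhibits $\Syn_E$ as compactly generated by a set of compact generators. Next I would invoke the standard form of the Schwede–Shipley / Morita recognition theorem for stable $\infty$-categories (e.g.\ \cite[7.1.2.1]{lurie-higher-algebra} applied to the set of generators, or its multi-object version): a stable presentable $\infty$-category compactly generated by a set $\mc G$ of compact objects is equivalent to left modules over the spectral category (ring with several objects) $\End(\mc G)$. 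Packaging the $\mb Z$ many generators $S^{0,j}$ into one $\mb Z$-graded ring spectrum $R_\bullet$ with $R_n \simeq \map_{\Syn_E}(S^{0,n}, S^{0,0})$, the multiplication coming from composition $\map(S^{0,n},S^{0,0}) \wedge \map(S^{0,m},S^{0,n}) \to \map(S^{0,m},S^{0,0})$ together with the identification $\map(S^{0,m+n},S^{0,n}) \simeq \map(S^{0,m},S^{0,0}) = R_m$ (shifting weight by $n$), one gets $\Syn_E \simeq \LMod_{R_\bullet}$, with the equivalence sending $Y$ to the graded spectrum $\{\map_{\Syn_E}(S^{0,n}, Y)\}_n$.

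The main obstacle is not really mathematical depth but rather the correct formulation of ``$\mb Z$-graded ring spectrum'' and checking that the endomorphism object of the set of generators assembles into one: one must verify that the composition pairings are compatible with the weight shifts, i.e.\ that $\map(S^{0,n},S^{0,m})$ depends only on $m-n$ in a way coherent with composition. This is immediate from the fact that smashing with $S^{0,1}$ (a $\tens$-invertible object of the symmetric monoidal category $\Syn_E$, by \cite{pstragowski-synthetic}) is an equivalence intertwining all the mapping spectra, so the endomorphism category of $\{S^{0,n}\}$ is the ``category of weights'' $\mb Z$ enriched in spectra, which is precisely a $\mb Z$-graded ring spectrum. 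I would also remark, matching the sentence after Theorem~\ref{thm:syntheticcellular}, that the bigraded homotopy groups of $R_\bullet$ recover $\pi_{\ast,\ast}(S^{0,0})$, since $\pi_i R_n = \pi_i \map_{\Syn_E}(S^{0,n}, S^{0,0}) = \pi_i \map_{\Syn_E}(S^{0,0}, S^{0,-n}) = \pi_{i,-n}(S^{0,0})$; this is a direct unwinding of definitions and needs no further argument.
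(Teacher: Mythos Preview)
Your proposal is correct and follows essentially the same route as the paper: both use Theorem~\ref{thm:syntheticcellular} to exhibit $\{S^{0,j}\}_{j\in\mb Z}$ as a set of compact generators, invoke a module-recognition principle (you phrase it as Schwede--Shipley, the paper as Barr--Beck monadicity for the functor $X\mapsto \map(S^{0,\bullet},X)$), and then use invertibility of the $S^{0,j}$ to collapse the resulting spectral category to a $\mb Z$-graded ring spectrum. The only cosmetic difference is that the paper packages the weight-shift coherence via a lax symmetric monoidal functor $\mb Z\to\Syn_E$, $n\mapsto S^{0,n}$, and identifies the monad explicitly as Day convolution with $R_\bullet$, whereas you argue the same coherence directly from $\otimes$-invertibility of $S^{0,1}$.
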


\begin{proof}
  There is a lax symmetric monoidal functor $\mb Z \to \Syn_E$, given by $n \mapsto S^{0,n}$. (The synthetic spectrum $S^{0,n}$ is the sheafification of the presheaf $X \mapsto \tau_{\geq -n} \map(X,\mb S)$, and so this is implied by lax symmetric monoidality of the Whitehead tower.) The set of $S^{0,n}$ are invertible compact generators for $\Syn_E$ as a stable category, and so the functors
  \[
    X \mapsto \map_{\Syn_E}(S^{0,n}, X)_{n \in \mb Z}
  \]
  determine a conservative functor $\Syn_E \to \Fun(\mb Z, \Sp)$ which preserves homotopy limits and colimits. The monadicity theorem thus applies. The left adjoint is $(Y_n)_{n \in \mb Z} \mapsto \bigoplus S^{0,n} \otimes Y_n$, and so the associated monad sends $(Y_n)_{n \in \mb Z}$ to
  \[
    \left(\bigoplus_n \map_{\Syn_E}(S^{0,m}, S^{0,n}) \otimes Y_n\right)_{m \in \mb Z} \simeq \left(\bigoplus_n \map_{\Syn_E}(S^{0,m-n}, S^{0,0}) \otimes Y_n\right)_{m \in \mb Z}
  \]
  However, this is equivalent to a monad
  \[
    Y_\bullet \mapsto \map_{\Syn_E}(S^{0,\bullet}, S^{0,0}) \circledast Y_\bullet
  \]
  where $\circledast$ is the Day convolution on $\mb Z$-graded spectra, as desired.
\end{proof}

\begin{rmk}
  The functor $n \mapsto S^{0,n}$ is actually a strong monoidal functor $(\mb Z,\leq) \to \Syn_E$, and the functor
  \[
    X \mapsto \map_{\Syn_E}(S^{0,\bullet}, X)
  \]
  is thus a lax symmetric monoidal functor $\Syn_E \to \Fun((\mb Z,\leq)^\op, \Sp)$. The ring spectrum $R_\bullet$ is the image of the unit, and thus has the structure of a commutative ring object in filtered spectra.
\end{rmk}

\nocite{lurie-higheralgebra}
\bibliography{../masterbib}
\end{document}